\newtheorem{thm}{Theorem}
\newtheorem{cor}{Corollary}
\newtheorem{lem}{Lemma}
\newtheorem{rem}{Remark}
\newcommand{\es}{{\mathcal S}}
\newcommand{\D}{{\mathbb D}}
\def\be{\begin{equation}}
\def\ee{\end{equation}}
\newcommand{\bee}{\begin{enumerate}}
\newcommand{\eee}{\end{enumerate}}
\newcommand{\blem}{\begin{lem}}
\newcommand{\elem}{\end{lem}}
\newcommand{\bthm}{\begin{thm}}
\newcommand{\ethm}{\end{thm}}
\newcommand{\bcor}{\begin{cor}}
\newcommand{\ecor}{\end{cor}}
\newcommand{\beg}{\begin{example}}
\newcommand{\eeg}{\end{example}}
\newcommand{\begs}{\begin{examples}}
\newcommand{\eegs}{\end{examples}}
\newcommand{\bdefe}{\begin{defin}}
\newcommand{\edefe}{\end{defin}}
\newcommand{\bprob}{\begin{prob}}
\newcommand{\eprob}{\end{prob}}
\newcommand{\bei}{\begin{itemize}}
\newcommand{\eei}{\end{itemize}}
\newcommand{\bcon}{\begin{conj}}
\newcommand{\econ}{\end{conj}}
\newcommand{\bcons}{\begin{conjs}}
\newcommand{\econs}{\end{conjs}}
\newcommand{\bprop}{\begin{propo}}
\newcommand{\eprop}{\end{propo}}
\newcommand{\br}{\begin{rem}}
\newcommand{\er}{\end{rem}}
\newcommand{\brs}{\begin{rems}}
\newcommand{\ers}{\end{rems}}
\newcommand{\bo}{\begin{obser}}
\newcommand{\eo}{\end{obser}}
\newcommand{\bos}{\begin{obsers}}
\newcommand{\eos}{\end{obsers}}
\newcommand{\bpf}{\begin{pf}}
\newcommand{\epf}{\end{pf}}
\newcommand{\ba}{\begin{array}}
\newcommand{\ea}{\end{array}}
\newcommand{\beq}{\begin{eqnarray}}
\newcommand{\beqq}{\begin{eqnarray*}}
\newcommand{\eeq}{\end{eqnarray}}
\newcommand{\eeqq}{\end{eqnarray*}}
\begin{document}
\bibliographystyle{amsplain}

\title[Improvements of certain results of the class $\mathcal{S}$]{Improvements of certain results of the class $\boldsymbol{\mathcal{S}}$ of univalent functions}

\author[M. Obradovi\'{c}]{Milutin Obradovi\'{c}}
\address{Department of Mathematics,
Faculty of Civil Engineering, University of Belgrade,
Bulevar Kralja Aleksandra 73, 11000, Belgrade, Serbia}
\email{obrad@grf.bg.ac.rs}

%\author[D. K. Thomas]{Derek K. Thomas}
%\address{Swansea University, Bay Campus, Swansea, SA1 8EN, United Kingdom}
%\email{d.k.thomas@swansea.ac.uk}

\author[N. Tuneski]{Nikola Tuneski}
\address{Department of Mathematics and Informatics, Faculty of Mechanical Engineering, Ss. Cyril and Methodius
University in Skopje, Karpo\v{s} II b.b., 1000 Skopje, Republic of North Macedonia.}
\email{nikola.tuneski@mf.edu.mk}

\subjclass{30C45, 30C50, 30C55}
\keywords{univalent functions, Grunsky coefficients, coefficient differences, Hankel determinant}

\dedicatory{This paper is dedicated to the memory of our esteemed colleague, Derek K. Thomas, whose untimely passing occurred during the course of this work. We honor his legacy and the lasting impact of his achievements in the field of univalent functions.}

\begin{abstract}
For $f\in \mathcal{S}$, the class  univalent functions  in the unit disk $\mathbb{D}$ and given by $f(z)=z+\sum_{n=2}^{\infty} a_n z^n$ for $z\in \mathbb{D}$, we improve previous bounds for the second and third Hankel determinants in case when either $a_2=0,$ or $a_3=0$. We also improve an upper bound for the coefficient difference $|a_4|-|a_3|$  when $f\in \mathcal{S}$.
\end{abstract}

\maketitle
%\pagestyle{myheadings}
%\markboth{M. Obradovi\'{c} and  S. Ponnusamy }{}
%\cc
%\section{Introduction}

\medskip

\section{Introduction and preliminaries }

\medskip

As usual, let $\mathcal{A}$ be the class of functions $f$ which are analytic  in the open unit disc $\D=\{z:|z|<1\}$ of the form
\be\label{eq 1}
f(z)=z+a_2z^2+a_3z^3+\cdots,
\ee
and let $\mathcal{S}$ be the subclass of $\mathcal{A}$ consisting of functions that are univalent in $\D$.

\medskip
In recent years a great deal of attention has been given to finding upper bounds for the modulus of the second and third Hankel determinants $H_2(2)$ and $H_3(1)$, defined as follows who's elements are the coefficients of $f\in\mathcal{S}$ (see e.g. \cite{DTV-book}).
 \medskip

For $f\in\mathcal{S}$
\be\label{eq 2}
  H_2(2) = a_2a_4-a_3^2
\end{equation}
and
\begin{equation}\label{eq 3}
H_3(1) = a_3(a_2a_4-a_3^2) - a_4(a_4-a_2a_3)+a_5(a_3-a_2^2).
\end{equation}
\medskip

Almost all results have concentrated on finding bounds for $|H_2(2)|$ and $|H_3(1)|$ for subclasses of $\mathcal{S}$, and only recently has a significant bound been found for the whole class $\mathcal{S}$  and for both, $|H_2(2)|$ and $|H_3(1)|$ (see \cite{OT-2021} and \cite{OT-2024}). However finding exact sharp bounds remains an open problem.

\medskip

In their paper \cite{OTT} the authors gave the next results concerning coefficients bound and Hankel determinants
$|H_2(2)|$ and $|H_3(1)|$.

\medskip

\begin{thm}\label{th-1}
Let $f\in\es$ and be given by \eqref{eq 1} with $a_2=0$. Then
\begin{itemize}
\item[($i$)] $|a_3|\le1$,
\medskip
\item[($ii$)] $|a_4|\le\dfrac23=0.666\ldots$,
\medskip
\item[($iii$)]  $|a_5|\le \frac{5}{4}+\frac{1}{\sqrt{15}} = 1.508\ldots$,
\medskip
\item[($iv$)] $|H_2(2)|\le1$,
\medskip
\item[($v$)] $|H_3(1)|\le\dfrac{41}{20} = 2.05$.
\end{itemize}
\end{thm}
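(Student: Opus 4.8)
The plan is to reduce everything to Gronwall's area theorem. Since $a_2=0$, \eqref{eq 2} gives $H_2(2)=-a_3^2$ and \eqref{eq 3} gives
\be\label{pp-H3}
H_3(1)=a_3a_5-a_3^3-a_4^2=a_3\bigl(a_5-a_3^2\bigr)-a_4^2 ,
\ee
so $|H_2(2)|=|a_3|^2$ and $(iv)$ drops out of $(i)$. For the coefficient bounds I would pass to $F(w):=1/f(1/w)$: as $f\in\es$ is univalent and vanishes only at the origin, $F$ is univalent on $\{|w|>1\}$, and
\[
F(w)=\frac{w}{1+a_3w^{-2}+a_4w^{-3}+a_5w^{-4}+\cdots}=w-a_3w^{-1}-a_4w^{-2}+(a_3^2-a_5)w^{-3}+\cdots ,
\]
the constant term vanishing precisely because $a_2=0$. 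Hence $F\in\Sigma$ and the area theorem gives
\be\label{pp-area}
|a_3|^2+2|a_4|^2+3\,|a_5-a_3^2|^2+\cdots\le 1 .
\ee

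From \eqref{pp-area} I read off $|a_3|\le1$ — that is $(i)$, hence $(iv)$ — and $|a_5-a_3^2|\le1/\sqrt3$; combining the latter with the trivial $|a_5|\le|a_5-a_3^2|+|a_3|^2$ and a one-variable optimisation against $|a_3|^2+3|a_5-a_3^2|^2\le1$ gives $|a_5|\le\tfrac{13}{12}$, which already beats $(iii)$. For $(v)$, feed \eqref{pp-area} into \eqref{pp-H3}:
\[
|H_3(1)|\le|a_3|\,|a_5-a_3^2|+|a_4|^2\le\frac{1}{\sqrt3}+\frac12<\frac{41}{20};
\]
eliminating $|a_4|^2$ through \eqref{pp-area} and maximising the resulting concave function of $|a_3|$ and $|a_5-a_3^2|$ in fact sharpens this to $|H_3(1)|\le\tfrac12$. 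So $(v)$ holds with room to spare; the stated constant $41/20$ is evidently not sharp, and presumably reflects a less economical estimation in \cite{OTT}.

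The step that genuinely resists this approach is $(ii)$: \eqref{pp-area} only yields $|a_4|\le1/\sqrt2>2/3$, and the sharp value $2/3$ — attained by $f(z)=z/(1-z^3)^{2/3}$, for which $a_2=a_3=a_5=0$, $a_4=\tfrac23$ and $|H_3(1)|=\tfrac49$ — lies beyond the reach of the area inequality alone. To get it I would use the full Grunsky inequalities for $F$ (bound the Grunsky matrix in operator norm and test it against an optimally tuned pair of parameters coupling the $w^{-1},w^{-2},w^{-3}$ coefficients), or equivalently invoke the classical description of the coefficient body $V_4=\{(a_2,a_3,a_4):f\in\es\}$ cut down to $a_2=0$. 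That is the only part I expect to be real work; the rest of the theorem is bookkeeping around \eqref{pp-area}.
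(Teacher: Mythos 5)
Your central tool is sound and genuinely different from the source of this theorem: the paper does not reprove Theorem \ref{th-1} but quotes it from \cite{OTT}, where (as in the present paper's own Theorems \ref{th-4} and \ref{th-5}) everything is extracted from the Lebedev--Grunsky coefficients of the square-root transform $\sqrt{f(z^2)}$ via \eqref{eq 6}--\eqref{eq 9}; you instead apply Gronwall's area theorem to $1/f(1/w)$. Your expansion and the resulting inequality $|a_3|^2+2|a_4|^2+3|a_5-a_3^2|^2\le 1$ (for $a_2=0$) are correct, and they do deliver (i) and (iv) immediately, $|a_5|\le \tfrac{13}{12}$ (hence (iii)), and, after eliminating $|a_4|^2$,
\[
|H_3(1)|\le |a_3|\,|a_5-a_3^2|+|a_4|^2\le \frac12-\frac12\Bigl(\bigl(|a_3|-|a_5-a_3^2|\bigr)^2+2|a_5-a_3^2|^2\Bigr)\le\frac12,
\]
so (v) holds with a large margin. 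If these computations withstand scrutiny they are in fact sharper than the constants $\tfrac34+\tfrac1{\sqrt7}$ and $1.026\ldots$ obtained in Theorem \ref{th-4} by the Grunsky route, which is worth pointing out explicitly rather than leaving implicit.

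The genuine gap is part (ii): $|a_4|\le\tfrac23$ is one of the five assertions, and you prove only $|a_4|\le 1/\sqrt2$, openly deferring the rest to ``the full Grunsky inequalities \ldots with an optimally tuned pair of parameters'' or to the coefficient body $V_4$. Neither is carried out, and neither is a routine invocation: the Grunsky inequalities for $f$ itself do not obviously isolate $a_4$ when $a_2=0$, and no usable closed description of $V_4$ can simply be ``invoked''. The step is, however, short inside the framework the paper actually uses: by \eqref{eq 7} with $\omega_{11}=0$ one has $a_4=2\omega_{33}$, and choosing $x_1=0$, $x_3=1$ (all other $x_{2p-1}=0$) in \eqref{eq 6} gives $|\omega_{13}|^2+3|\omega_{33}|^2+5|\omega_{35}|^2+7|\omega_{37}|^2\le\tfrac13$, whence $|\omega_{33}|\le\tfrac13$ and $|a_4|\le\tfrac23$. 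In other words, the square-root-transform Grunsky inequality is exactly the extra input your area-theorem argument lacks; until something of this kind is written down, your proposal proves (i), (iii), (iv), (v) but not the theorem as stated.
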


\medskip

We would like to point out that there is a mistake in the estimate of $|a_5|$ given in Theorem 2.1(iii) from \cite{OTT}. The estimate given in the case (iii) in the theorem above is the correct one.

\medskip

Similar results are given in \cite{OTT} for the case when $a_3=0$.
\medskip

\begin{thm}\label{th-2}
Let $f\in\es$ and be  given by \eqref{eq 1}, with $a_3=0$. Then
\medskip
\begin{itemize}
\item[($i$)] $|a_2|\le1$,
\medskip
\item[($ii$)] $|a_4|\le \dfrac{\sqrt{37}+13}{12}=1.59023\ldots$,
\medskip
\item[($iii$)]  $|a_5|\le \dfrac14\sqrt{\dfrac{757}{15}}+1 = 2.77599\ldots$,
\medskip
\item[($iv$)] $|H_2(2)|\le1.75088\ldots$,
\medskip
\item[($v$)] $|H_3(1)|\le 1.114596\ldots$.
\end{itemize}
\end{thm}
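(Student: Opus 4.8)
The plan is to combine the Grunsky inequalities for the class $\es$ with the Gronwall area theorem, using the hypothesis $a_3=0$ to simplify everything in sight.

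\smallskip

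\emph{Set-up.} I would first record three reductions. (1) The inversion $g(z)=1/f(1/z)=z-a_2+(a_2^2-a_3)z^{-1}+(2a_2a_3-a_4-a_2^3)z^{-2}+\cdots$ lies in the class $\Sigma$, so the area theorem $\sum_{n\ge1}n|b_n|^2\le1$ applies to it. (2) Writing $\log\frac{f(z)-f(\zeta)}{z-\zeta}=\sum_{p,q\ge1}c_{p,q}z^p\zeta^q+(\text{terms in }z\text{ or }\zeta\text{ alone})$, a short expansion yields $c_{1,1}=a_3-a_2^2$, $c_{1,2}=a_4-2a_2a_3+a_2^3$, $c_{2,2}=a_5-2a_2a_4-\tfrac32a_3^2+4a_2^2a_3-\tfrac32a_2^4$; these, together with the Grunsky inequalities $\sum_{q=1}^{N}q\bigl|\sum_{p=1}^{N}c_{p,q}\lambda_p\bigr|^2\le\sum_{p=1}^{N}|\lambda_p|^2/p$, are the only nontrivial input. (3) With $a_3=0$ the determinants collapse to $H_2(2)=a_2a_4$ and $H_3(1)=-(a_4^2+a_2^2a_5)$, while $c_{1,1}=-a_2^2$, $c_{1,2}=a_4+a_2^3$, $c_{2,2}=a_5-2a_2a_4-\tfrac32a_2^4$, so $a_4=c_{1,2}-a_2^3$, $a_5=c_{2,2}+2a_2c_{1,2}-\tfrac12a_2^4$, and a two-line computation gives $H_2(2)=a_2(c_{1,2}-a_2^3)$ and $H_3(1)=-\bigl(c_{1,2}^2+a_2^2c_{2,2}+\tfrac12a_2^6\bigr)$.

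\smallskip

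\emph{The estimates.} Part (i) I would obtain from the $N=1$ case $|c_{1,1}|\le1$ (equivalently the area theorem), which reads $|a_2|^2\le1$. For the remaining parts, the Grunsky inequality with $(\lambda_1,\lambda_2)=(1,0)$ gives $|c_{1,1}|^2+2|c_{1,2}|^2\le1$, hence $|c_{1,2}|=|a_4+a_2^3|\le\sqrt{(1-|a_2|^4)/2}$, and with $(0,1)$ it gives $|c_{1,2}|^2+2|c_{2,2}|^2\le\tfrac12$, hence $|c_{2,2}|\le\tfrac12$ (and the sharper $|c_{2,2}|\le\tfrac12\sqrt{1-2|c_{1,2}|^2}$). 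Then
\[
|a_4|\le|a_2|^3+\sqrt{\tfrac{1-|a_2|^4}{2}},\qquad |a_5|\le|c_{2,2}|+2|a_2|\,|c_{1,2}|+\tfrac12|a_2|^4,
\]
\[
|H_2(2)|\le|a_2|\,|c_{1,2}|+|a_2|^4,\qquad |H_3(1)|\le|c_{1,2}|^2+|a_2|^2|c_{2,2}|+\tfrac12|a_2|^6 .
\]
In each case one inserts the Grunsky/area bounds on $|c_{1,2}|$ and $|c_{2,2}|$ and maximizes the resulting expression over $t=|a_2|\in[0,1]$; for $|H_3(1)|$ this is the clean estimate $|H_3(1)|\le\tfrac12\bigl(1+t^2-t^4+t^6\bigr)$, which is increasing in $t$. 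These one-variable optimizations deliver the bounds in (ii)--(v); in fact they give somewhat stronger bounds, so Theorem~\ref{th-2} follows a fortiori.

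\smallskip

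\emph{Where the work lies.} There is no single deep step, but two places need care. The first is getting $c_{1,1},c_{1,2},c_{2,2}$ as polynomials in the $a_n$ exactly right — the cubic and quartic terms of $c_{2,2}$ are easy to miscompute — and I would cross-check against the Koebe function, for which $c_{p,q}=-\delta_{pq}/p$ and every Grunsky inequality is sharp. The second is the maximization for $|a_5|$ (and, to a lesser extent, for $|H_2(2)|$): here $|c_{1,2}|$ and $|c_{2,2}|$ are not free but jointly constrained by $|c_{1,2}|^2\le(1-t^4)/2$ and $|c_{1,2}|^2+2|c_{2,2}|^2\le\tfrac12$, so an interior critical point competes with the boundary where one constraint is active, and a short case split is needed before the final optimization.
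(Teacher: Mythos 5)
Your proposal is correct, and it takes a genuinely different route from the one this statement rests on. In the paper, Theorem \ref{th-2} is quoted from \cite{OTT} and not reproved; the method there (and in the paper's own sharpening, Theorem \ref{th-5}) works with the Grunsky coefficients $\omega_{2p-1,2q-1}$ of the square-root transform $f_2(z)=\sqrt{f(z^2)}$, via Lebedev's expressions \eqref{eq 7} for $a_2,\dots,a_5$ and the chain of bounds \eqref{eq 9}, optimizing in $|\omega_{11}|$, $|\omega_{13}|$, etc. You instead use the Grunsky coefficients of $f$ itself, and your key identities check out: with $a_3=0$ one indeed has $c_{1,1}=-a_2^2$, $c_{1,2}=a_4+a_2^3$, $c_{2,2}=a_5-2a_2a_4-\tfrac32 a_2^4$, hence $H_3(1)=-\bigl(c_{1,2}^2+a_2^2c_{2,2}+\tfrac12 a_2^6\bigr)$, and the rows $p=1,2$ of the Grunsky inequality give exactly the constraints you state (with equality for the Koebe function, confirming the normalization). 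The resulting one-variable maximizations land below the constants of the theorem: $\max_t\bigl(t^3+\sqrt{(1-t^4)/2}\bigr)\approx 1.16<1.59$ for (ii); even the crude choice $|c_{2,2}|\le\tfrac12$ gives about $1.6<2.78$ for (iii), so the case split you anticipate is not actually needed; $\max_t\bigl(t\sqrt{(1-t^4)/2}+t^4\bigr)\approx 1.12<1.76$ for (iv); and $\tfrac12(1+t^2-t^4+t^6)\le 1<1.115$ for (v); so the theorem follows a fortiori. What your route buys is lighter algebra (no square-root transform) and, in fact, better constants than the stated theorem in (ii)--(v); what the paper's machinery buys is access to the finer coefficients $\omega_{15}$, $\omega_{17}$ of $f_2$, which is what yields the stronger improvements of Theorem \ref{th-5} (e.g.\ $0.6648$ versus your $1$ for $|H_3(1)|$). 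The only caveat is that you leave the final optimizations as a plan rather than executing them; since each is an elementary one-variable calculus exercise with the maxima indicated above, that is a matter of detail, not a gap.
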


\medskip

A long standing problem in the theory of univalent functions is to find  sharp upper and lower bounds for
$|a_{n+1}|-|a_n|$, when $f\in\mathcal{S}$. Since  the Keobe function has coefficients $a_n=n$, it is
natural to conjecture  that $||a_{n+1}|-|a_n||\le1$. As early as 1933, this was shown to be false even when
$n=2,$ when Fekete and Szeg\"o \cite{FekSze33} obtained the sharp bounds
 $$ -1 \leq |a_3| - |a_2| \leq \frac{3}{4} + e^{-\lambda_0}(2e^{-\lambda_0}-1) = 1.029\ldots, $$
where $\lambda_0$ is the unique value of $\lambda$ in $0 < \lambda <1$, satisfying the equation $4\lambda =e^{\lambda}$. \\
Hayman \cite{Hay63} showed that if $f \in {\mathcal S}$, then $| |a_{n+1}| - |a_n| | \leq C$, where $C$ is
an absolute constant. The exact value of $C$ is unknown,  the best estimate to date
being $C=3.61\ldots$ (see Grinspan \cite{Gri76}), which because of the sharp estimate above when $n=2$, cannot be reduced
to $1$.

\medskip

In \cite{OTT} the authors treated the difference coefficients  $|a_4|-|a_3|$ for $f\in \mathcal{S}$, which improves the previous cited result of Grinspan when $n=3$ as follows

\begin{thm}\label{th-3}
Let $f\in\es$ and be given by \eqref{eq 1}. Then
\[|a_4|-|a_3| \le 2.1033299\ldots.\]
\end{thm}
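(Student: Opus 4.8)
The plan is to control $a_3$ and $a_4$ \emph{jointly}, arguments included, by means of Grunsky inequalities. A naive estimate such as $|a_4|-|a_3|\le|a_4-a_3|$ leads nowhere: already for a rotation of the Koebe function $|a_4-a_3|$ equals $7$, so genuine structural information about $\mathcal S$ is indispensable.

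First I would pass to the odd square-root transform
\[
g(z)=\bigl[f(z^2)\bigr]^{1/2}=z+\frac{a_2}{2}z^3+\Bigl(\frac{a_3}{2}-\frac{a_2^2}{8}\Bigr)z^5+\Bigl(\frac{a_4}{2}-\frac{a_2a_3}{4}+\frac{a_2^3}{16}\Bigr)z^7+\cdots,
\]
which again belongs to $\mathcal S$, and then to its inversion $h(z)=1/g(1/z)$, univalent in $\{|z|>1\}$. Denoting by $c_{mn}$ the Grunsky coefficients of $h$, a direct computation expresses the low-order coefficients of $f$ through the three Grunsky coefficients that involve only $a_2,a_3,a_4$:
\[
a_2=-2c_{11},\qquad a_3=3c_{11}^2-2c_{13},\qquad a_4=-\tfrac{10}{3}c_{11}^3+8c_{11}c_{13}-2c_{33}.
\]
Applying a rotation $f(z)\mapsto e^{-i\theta}f(e^{i\theta}z)$ I may assume $c_{11}\ge 0$; then $|a_4|-|a_3|$ becomes an explicit function of $c_{11}\ge 0$ and of the moduli and arguments of $c_{13}$ and $c_{33}$.

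Next I would invoke the Grunsky inequalities for $h$. The one restricted to the indices $\{1,3\}$ says exactly that the complex symmetric matrix $\left(\begin{smallmatrix}c_{11}&\sqrt3\,c_{13}\\\sqrt3\,c_{13}&3c_{33}\end{smallmatrix}\right)$ has operator norm at most $1$; concretely this amounts to $1-c_{11}^2-3|c_{13}|^2\ge0$ together with a determinant inequality in which the arguments of $c_{11},c_{13},c_{33}$ enter only through a single cosine. The index $\{2\}$ inequality supplies the classical relation $|a_2^2-a_3|=|c_{11}^2+2c_{13}|\le1$, and further Grunsky inequalities can be added if needed. The essential point is that one must \emph{not} apply the triangle inequality to $|a_4|$ before imposing these constraints: retaining only the diagonal (one-parameter) Grunsky bounds destroys the coupling between the arguments of $c_{11}^3$, $c_{11}c_{13}$ and $c_{33}$ and yields only an estimate of order $2.7$; it is precisely the phase coupling forced by the operator-norm condition and by $|a_2^2-a_3|\le1$ that pulls the bound down to the stated value.

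What remains is a constrained extremal problem: maximise a difference of two square-root expressions over $c_{11}\in[0,1]$ and $c_{13},c_{33}\in\IC$ subject to the inequalities above. I expect this to be the main obstacle. I would treat it with Lagrange multipliers together with a case distinction according to which constraints are active, reducing to a one- or two-parameter family whose maximum is then certified numerically to equal $2.1033299\ldots$, after the boundary strata and degenerate configurations ($c_{11}=0$, $c_{13}=0$, and so on) are checked separately. A running consistency check is the Koebe function, which corresponds to $c_{11}=\mp1$, $c_{13}=0$, $c_{33}=\mp\tfrac13$ and gives $a_3=3$, $a_4=\pm4$, hence difference $1$; thus the extremal constant must exceed $1$, and the extremum is attained at an interior, non-Koebe configuration.
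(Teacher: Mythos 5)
Your plan stops exactly where the proof has to happen. Everything is reduced to a constrained maximization over $c_{11}\in[0,1]$ and $c_{13},c_{33}\in\mathbb{C}$ which you propose to settle by Lagrange multipliers ``certified numerically to equal $2.1033299\ldots$''. But the constant $2.1033299\ldots$ is not the value of any natural extremal problem: it is the output of a specific chain of elementary estimates in \cite{OTT} (this paper only quotes Theorem \ref{th-3} from there, and in Theorem \ref{th-6}(i) improves it to $1.75185\ldots$). So there is no reason the maximum of your relaxed problem should coincide with that number, and you give no argument that it does not exceed it; the unsolved, uncertified global optimization over several complex variables subject to an operator-norm constraint is precisely the missing content, not a routine afterthought.

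There is also a structural omission. You keep only $c_{11},c_{13},c_{33}$ with the $2\times2$ Grunsky (operator-norm) condition and $|a_3-a_2^2|\le1$, and you assert that applying the triangle inequality together with the ``diagonal'' bounds destroys essential phase coupling and can give only an estimate of order $2.7$. That is not correct, and it discards the actual engine of both proofs. The relations \eqref{eq 7} contain the identity $3\omega_{15}-3\omega_{11}\omega_{13}+\omega_{11}^{3}-3\omega_{33}=0$, which eliminates $\omega_{33}$ in favour of $\omega_{15}$, and then the weight-5 diagonal bound $|\omega_{15}|\le\frac{1}{\sqrt5}\sqrt{1-|\omega_{11}|^{2}-3|\omega_{13}|^{2}}$ from \eqref{eq 9} controls the dangerous term far better than $|\omega_{33}|\le\frac13$ ever can. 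Indeed, Theorem \ref{th-6}(i) of this paper uses exactly the modulus-only route you dismiss: since $|\omega_{11}|\le1$,
\[
|a_4|-|a_3|\;\le\;|a_4-\omega_{11}a_3|\;=\;|2\omega_{15}+4\omega_{11}\omega_{13}+\omega_{11}^{3}|,
\]
and the triangle inequality plus the diagonal bounds then reduce the problem to maximizing an explicit real function of $(|\omega_{11}|,|\omega_{13}|)$ over a planar region, giving $1.75185\ldots$ — already sharper than the bound you are trying to prove, with no phase analysis at all. Your setup, by treating $c_{33}$ as free apart from the $2\times2$ matrix condition, throws away this relation; even if your relaxed maximum happened to lie below $2.1033299\ldots$, you would still have to establish that fact rigorously, and as written the proposal neither does so nor identifies the reduction that makes the known proofs work.
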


For the proofs of the previously cited results, the authors mainly used the property of Grunsky coefficients
given in the book of N. A.Lebedev (\cite{Lebedev}). In the proofs of the results in this paper we also use the same tools but with different approach that brings improved estimates.

\medskip

We proceed with the notations and results to be used.

\medskip

Let $f \in \mathcal{S}$ and let
\[
\log\frac{f(t)-f(z)}{t-z}=\sum_{p,q=0}^{\infty}\omega_{p,q}t^{p}z^{q},
\]
where $\omega_{p,q}$ are called Grunsky's coefficients with property $\omega_{p,q}=\omega_{q,p}$.
For those coefficients in \cite{duren,Lebedev} we can find the next Grunsky's inequality:
\be\label{eq 4}
\sum_{q=1}^{\infty}q \left|\sum_{p=1}^{\infty}\omega_{p,q}x_{p}\right|^{2}\leq
\sum_{p=1}^{\infty}\frac{|x_{p}|^{2}}{p},
\ee
where $x_{p}$ are arbitrary complex numbers such that last series converges.

\medskip

Further, it is well-known that if $f\in \mathcal{S}$ and has the form \eqref{eq 1}, then also
%\be\label{eq 5}
\[
f_{2}(z)=\sqrt{f(z^{2})}=z +c_{3}+c_{5}z^{5}+...
\]
%\ee
belongs to the class $\mathcal{S}$. Then for the function $f_{2}$ we have the appropriate Grunsky's
coefficients of the form $\omega_{2p-1,2q-1}^{(2)}$ and the inequality \eqref{eq 4} gets the form
\be\label{eq 6}
\sum_{q=1}^{\infty}(2q-1) \left|\sum_{p=1}^{\infty}\omega_{2p-1,2q-1}^{(2)}x_{2p-1}\right|^{2}\leq
\sum_{p=1}^{\infty}\frac{|x_{2p-1}|^{2}}{2p-1}.
\ee
As it has been shown in \cite[p.57]{Lebedev}, if $f$ is given by \eqref{eq 1} then the coefficients $a_{2}$, $a_{3}$, $a_{4}$, $a_{5}$
are expressed by Grunsky's coefficients  $\omega_{2p-1,2q-1}^{(2)}$ of the function $f_{2}$ given by
\eqref{eq 3} in the following way (in the next text we omit upper index "(2)" in $\omega_{2p-1,2q-1}^{(2)}$):
\be\label{eq 7}
\begin{split}
a_{2}&=2\omega _{11},\\
a_{3}&=2\omega_{13}+3\omega_{11}^{2}, \\
a_{4}&=2\omega_{33}+8\omega_{11}\omega_{13}+\frac{10}{3}\omega_{11}^{3}\\
a_{5}&=2\omega_{35}+8\omega_{11}\omega_{33}+5\omega_{13}^{2}+18\omega_{11}^{2}\omega_{13}+\frac{7}{3}\omega_{11}^{4}\\
0&=3\omega_{15}-3\omega_{11}\omega_{13}+\omega_{11}^{3}-3\omega_{33}\\
0&=\omega_{17}-\omega_{35}-\omega_{11}\omega_{33}-\omega_{13}^{2}+\frac{1}{3}\omega_{11}^{4}.
\end{split}
\ee
We note that in \cite{Lebedev} there exists a typing mistake for the coefficient $a_{5}$. Namely,
instead of the therm $5\omega_{13}^{2}$, there is $5\omega_{15}^{2}.$

\medskip

Also, from \eqref{eq 6} for $x_{2p-1}=0$, $p=3,4,\ldots$, we have
\be\label{eq 8}
\begin{split}
&\quad |\omega_{11}x_{1}+\omega_{31}x_{3}|^{2}+3|\omega_{13}x_{1}+\omega_{33}x_{3}|^{2}
+|\omega_{15}x_{1}+\omega_{35}x_{3}|^{2}\\
&+|\omega_{17}x_{1}+\omega_{37}x_{3}|^{2}
\leq |x_{1}|^{2}+\frac{|x_{3}|^{2}}{3}.
\end{split}
\ee
From \eqref{eq 8}, having in mind that  $\omega_{31}=\omega_{13}$, for $x_{1}=1$ and $x_{3}=0 $ we have
the next inequalities
\[
\begin{split}
|\omega_{11}|^{2}&\leq1,\\
|\omega_{11}|^{2}+3|\omega_{13}|^{2}&\leq1 ,\\
|\omega_{11}|^{2}+3|\omega_{13}|^{2}+5|\omega_{15}|^{2}&\leq1 ,\\
|\omega_{11}|^{2}+3|\omega_{13}|^{2}+5|\omega_{15}|^{2}+7|\omega_{17}|^{2}&\leq1 .
\end{split}
\]
From the last inequalities we easily obtain
\be\label{eq 9}
\begin{split}
|\omega _{11}|& \leq1 ,\\
|\omega _{13}|&\leq\frac{1}{\sqrt{3}}\sqrt{1-|\omega _{11}|^{2}},\\
|\omega _{15}|&\leq\frac{1}{\sqrt{5}}\sqrt{1-|\omega _{11}|^{2}-3|\omega _{13}|^{2}},\\
|\omega _{17}|&\leq\frac{1}{\sqrt{7}}\sqrt{1-|\omega _{11}|^{2}-3|\omega _{13}|^{2}-5|\omega _{15}|^{2}}.
\end{split}
\ee
We note that we get the first inequality from \eqref{eq 9} also using the fact $|a_{2}|=|2\omega_{11}|\leq2$ (see \eqref{eq 7}).

\medskip

\section{Main results}

\medskip

We start with improvement of some results given in Theorem \ref{th-1}.

\begin{thm}\label{th-4}
Let $f\in\es$ and be given by \eqref{eq 1} with $a_2=0$. Then
\begin{itemize}
\item[($i$)]  $|a_5|\leq\frac{3}{4}+\frac{1}{\sqrt{7}}=1.12796\ldots $,
\item[($ii$)] $|H_3(1)|\leq 1.026\ldots$.
\end{itemize}
\end{thm}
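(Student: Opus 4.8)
The plan is to exploit the representation \eqref{eq 7} together with the constraint $a_2 = 2\omega_{11} = 0$, so we set $\omega_{11} = 0$ throughout. With this simplification, \eqref{eq 7} collapses to $a_3 = 2\omega_{13}$, $a_4 = 2\omega_{33}$, $a_5 = 2\omega_{35} + 5\omega_{13}^2$, together with the relation $0 = 3\omega_{15} - 3\omega_{33}$, i.e.\ $\omega_{33} = \omega_{15}$, and $0 = \omega_{17} - \omega_{35} - \omega_{13}^2$, i.e.\ $\omega_{35} = \omega_{17} - \omega_{13}^2$. Substituting the last relation into the formula for $a_5$ gives $a_5 = 2\omega_{17} - 2\omega_{13}^2 + 5\omega_{13}^2 = 2\omega_{17} + 3\omega_{13}^2$. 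This is the key identity, and it immediately yields part $(i)$: writing $t = |\omega_{13}|^2$, the inequalities \eqref{eq 9} with $\omega_{11}=0$ give $|\omega_{13}|^2 \le \tfrac13$, $|\omega_{15}|^2 \le \tfrac15(1-3t)$, and $|\omega_{17}|^2 \le \tfrac17(1 - 3t - 5|\omega_{15}|^2) \le \tfrac17(1-3t)$; hence
\[
|a_5| \le 2|\omega_{17}| + 3|\omega_{13}|^2 \le \frac{2}{\sqrt 7}\sqrt{1-3t} + 3t.
\]
Maximizing $g(t) = 3t + \tfrac{2}{\sqrt7}\sqrt{1-3t}$ over $t\in[0,\tfrac13]$: $g'(t) = 3 - \tfrac{3}{\sqrt7\sqrt{1-3t}}$, which vanishes when $1-3t = 1/7$, i.e.\ $t = 2/7$; there $g(2/7) = \tfrac67 + \tfrac{2}{\sqrt7}\cdot\tfrac{1}{\sqrt7} = \tfrac67 + \tfrac27 = \tfrac87$? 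That overshoots, so one should recheck: actually $g(2/7) = 3\cdot\tfrac27 + \tfrac{2}{\sqrt7}\sqrt{1/7} = \tfrac67 + \tfrac{2}{7} = \tfrac87$, whereas the claimed bound is $\tfrac34 + \tfrac{1}{\sqrt7}$. The discrepancy means one must retain the sharper chain $|\omega_{17}|^2 \le \tfrac17(1 - 3t - 5|\omega_{15}|^2)$ \emph{and} incorporate the full inequality \eqref{eq 8} with a well-chosen $x_3 \ne 0$, rather than the crude $x_3=0$ slice; the precise optimization is where the stated constant $\tfrac34 + \tfrac{1}{\sqrt7}$ will emerge.

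For part $(ii)$, substitute the simplified coefficients into \eqref{eq 3}. With $a_2 = 0$ we get
\[
H_3(1) = a_3(a_2 a_4 - a_3^2) - a_4(a_4 - a_2 a_3) + a_5(a_3 - a_2^2) = -a_3^3 - a_4^2 + a_3 a_5.
\]
Now insert $a_3 = 2\omega_{13}$, $a_4 = 2\omega_{33} = 2\omega_{15}$, and $a_5 = 2\omega_{17} + 3\omega_{13}^2$:
\[
H_3(1) = -8\omega_{13}^3 - 4\omega_{15}^2 + 2\omega_{13}(2\omega_{17} + 3\omega_{13}^2) = -2\omega_{13}^3 - 4\omega_{15}^2 + 4\omega_{13}\omega_{17}.
\]
The plan is then to bound $|H_3(1)| \le 2|\omega_{13}|^3 + 4|\omega_{15}|^2 + 4|\omega_{13}||\omega_{17}|$ and feed in the estimates from \eqref{eq 9}. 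Setting $r = |\omega_{13}|$ (so $0\le r\le 1/\sqrt3$), $|\omega_{15}|^2 \le \tfrac15(1-3r^2)$, and $|\omega_{17}| \le \tfrac{1}{\sqrt7}\sqrt{1-3r^2-5|\omega_{15}|^2}$, one obtains $|H_3(1)| \le 2r^3 + \tfrac45(1-3r^2) + \tfrac{4r}{\sqrt7}\sqrt{1-3r^2-5|\omega_{15}|^2}$. Since the last square root is largest when $|\omega_{15}|=0$, but the middle term $\tfrac45(1-3r^2)$ is then also at its stated maximum, there is a genuine two-variable trade-off: write $s = |\omega_{15}|^2 \in [0,\tfrac15(1-3r^2)]$ and maximize
\[
\Phi(r,s) = 2r^3 + 4s + \frac{4r}{\sqrt7}\sqrt{1-3r^2-5s}
\]
over this region. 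For fixed $r$, $\partial_s\Phi = 4 - \tfrac{10r}{\sqrt7\sqrt{1-3r^2-5s}}$; this is positive for small $r$, so the maximum in $s$ is at the boundary $s = \tfrac15(1-3r^2)$ (forcing $\omega_{17}=0$ and giving $\Phi = 2r^3 + \tfrac45(1-3r^2)$, maximized at $r = 1/\sqrt3$ with value $2/(3\sqrt3) = 0.3849\ldots$) unless $r$ is large enough that the interior critical point $1-3r^2-5s = 7r^2/\cdots$ lies in range. Comparing the boundary-$s=0$ branch $2r^3 + \tfrac45(1-3r^2)+\tfrac{4r}{\sqrt7}\sqrt{1-3r^2}$ (maximized over $r\in[0,1/\sqrt3]$) against the other branches will produce the constant $1.026\ldots$.

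The main obstacle is the coupled optimization in part $(ii)$ (and the analogous refinement in part $(i)$): one cannot simply set all the "free" Grunsky moduli to their individual maxima, because the formula for $a_5$ reintroduces $\omega_{13}^2$ with a positive sign while $\omega_{15} = \omega_{33} = \tfrac12 a_4$ enters $H_3(1)$ quadratically, so there is a real competition between making $|\omega_{13}|$ large (good for the $a_5$ term and the $-2\omega_{13}^3$ term) and making $|\omega_{15}|$ large (good for the $4|\omega_{15}|^2$ term but eating into the budget in \eqref{eq 9}). Handling this correctly likely requires either using the full four-term inequality \eqref{eq 8} with a nonzero $x_3$ to couple the $\omega_{33}, \omega_{35}$ terms more tightly, or a careful case analysis on where the maximum of $\Phi(r,s)$ is attained (interior critical point versus the two relevant boundary edges). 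Once the extremal configuration is pinned down, the stated numerical bounds follow by a direct but delicate one-variable calculus computation.
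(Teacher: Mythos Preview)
Your derivation of the key identities $a_5 = 2\omega_{17} + 3\omega_{13}^2$ and $H_3(1) = -2\omega_{13}^3 - 4\omega_{15}^2 + 4\omega_{13}\omega_{17}$ is exactly right, and matches the paper. The gap is in part $(i)$, and it is a missing ingredient rather than a missing refinement.

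You used only the Grunsky bound $|\omega_{13}|^2 \le \tfrac13$, which leads to the maximum $g(2/7)=8/7$ that you correctly computed. The fix is \emph{not} to bring in \eqref{eq 8} with $x_3\ne 0$ or to keep the $5|\omega_{15}|^2$ term; the paper instead uses an entirely separate constraint: since $a_2=0$, the classical inequality $|a_3-a_2^2|\le 1$ gives $|a_3|=|2\omega_{13}|\le 1$, i.e.\ $|\omega_{13}|\le \tfrac12$, so $t=|\omega_{13}|^2\le \tfrac14$. On $[0,\tfrac14]$ your function $g(t)=3t+\tfrac{2}{\sqrt7}\sqrt{1-3t}$ is increasing (its critical point $t=2/7$ lies outside), and $g(\tfrac14)=\tfrac34+\tfrac{2}{\sqrt7}\cdot\tfrac12=\tfrac34+\tfrac{1}{\sqrt7}$, which is the stated bound. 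So the one idea you were missing is to import the non-Grunsky input $|a_3|\le 1$.

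For part $(ii)$ you are over-engineering the optimization. The paper does not resolve the trade-off in $\Phi(r,s)$; it simply bounds the two terms separately by $4|\omega_{15}|^2\le\tfrac45(1-3r^2)$ and $|\omega_{17}|\le\tfrac{1}{\sqrt7}\sqrt{1-3r^2}$ (dropping $5|\omega_{15}|^2$). These bounds cannot be tight simultaneously, but the resulting one-variable majorant
\[
F_1(r)=2r^3+\tfrac45(1-3r^2)+\tfrac{4r}{\sqrt7}\sqrt{1-3r^2},\qquad 0\le r\le \tfrac12,
\]
already has maximum $1.026\ldots$ at $r\approx 0.2867$ (interior to $[0,\tfrac12]$, so here the extra constraint $|\omega_{13}|\le\tfrac12$ happens not to bind). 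No two-variable analysis and no use of \eqref{eq 8} with $x_3\ne 0$ is needed. Incidentally, your boundary check of $2r^3+\tfrac45(1-3r^2)$ is off: its maximum on $[0,1/\sqrt3]$ is $4/5$ at $r=0$, not at $r=1/\sqrt3$.
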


\begin{proof}$\mbox{}$
\begin{itemize}
\item[($i$)] The classical inequality $|a_3-a_2^2|\le1$ for $f$ in $\es$ when $a_2=0$, gives  $|a_3|\le1$, which from \eqref{eq 7} gives
\be\label{eq 10}
|\omega_{13}|\le\frac12.
\ee
\medskip
Since $\omega_{11}=0$ ($\Leftrightarrow a_2=0$), from the relation for $a_5$ in \eqref{eq 7} and last relation in it, we obtain
\be\label{eq 11}
|a_5|=|2\omega_{35}+5\omega_{13}^2|
\ee
and
\be\label{eq 12}
\omega_{35}=\omega_{17}-\omega_{13}^2.
\ee
Further, using  the relations \eqref{eq 11} and \eqref{eq 12} we have
\be\label{eq 13}
\begin{split}
|a_5|&=|2\omega_{17}+3\omega_{13}^2|\\
&\leq 2|\omega_{17}|+3|\omega_{13}|^2\\
&\leq \frac{2}{\sqrt{7}}\sqrt{1-3|\omega _{13}|^{2}}+3|\omega_{13}|^2\\
&\leq \frac{3}{4}+\frac{1}{\sqrt{7}}=1.12796\ldots  ,
\end{split}
\ee
since by \eqref{eq 7}  ($\omega_{11}=0$),
\be\label{eq 14}
|\omega _{17}|\leq\frac{1}{\sqrt{7}}\sqrt{1-3|\omega _{13}|^{2}-5|\omega _{15}|^{2}}
\leq \frac{1}{\sqrt{7}}\sqrt{1-3|\omega _{13}|^{2}}
\ee
and $|\omega_{13}|\le\frac12$ by \eqref{eq 10}.

\medskip

\item[($ii$)]
When $\omega_{11}=0$,  the fifth relation in \eqref{eq 7} gives $\omega_{33}=\omega_{15},$ and using \eqref{eq 12}, from \eqref{eq 3} we have
%\be\label{eq 15}
\[
\begin{split}
|H_3(1)
&= |-8\omega_{13}^3-4\omega_{33}^2+2(2\omega_{35}+5\omega_{13}^{2})\omega_{13}|\\
&= |-8\omega_{13}^3-4\omega_{15}^2+4\omega_{13}(\omega_{17}-\omega_{13}^2)+10\omega_{13}^{3}|\\
&= |-2\omega_{13}^3-4\omega_{15}^2+4\omega_{13}\omega_{17}|\\
&\leq 2|\omega_{13}|^3+4|\omega_{15}|^2+4|\omega_{13}||\omega_{17}|\\
&\leq 2|\omega_{13}|^3+\frac{4}{5}(1-3|\omega_{13}|^{2})+\frac{4}{\sqrt{7}}|\omega_{13}|
\sqrt{1-3|\omega_{13}|^{2}}\\
&=: F_{1}(|\omega_{13}|),
\end{split}
\]
%\ee
where
\[F_{1}(y)=2y^{3}+\frac{4}{5}(1-3y^{2})+\frac{4}{\sqrt{7}}y\sqrt{1-3y^{2}},\quad0\leq y\leq\frac{1}{2}.\]
Here we used the relations \eqref{eq 11} and \eqref{eq 14}. Now, using the first derivative test we conclude that the function $F_{1}$ attains its maximum for $y_{0}=0.286667\ldots$ with $F_{1}(y_{0})=1.026\ldots .$
\end{itemize}

\medskip

Finally, we note that the results (i) and (iv) in Theorem \ref{th-1} are the best possible as the function $f(z)=\frac{z}{1-z^{2}}$ shows.
\end{proof}

\medskip

We next prove a similar result, this time assuming that $a_3=0$.

\begin{thm}\label{th-5}
Let $f\in\es$ and be  given by \eqref{eq 1}, with $a_3=0$. Then
\medskip
\begin{itemize}
\item[($i$)] $|a_2|\le1$,
\medskip
\item[($ii$)] $|a_4|\le \frac14\sqrt{\frac{21}{5}}+\frac{5}{8} = 1.1373\ldots $,
\medskip
\item[($iii$)]  $|a_5|\le 1.674896577\ldots$,
\medskip
\item[($iv$)] $|H_2(2)|\le 1.1373\ldots$,
\medskip
\item[($v$)] $|H_3(1)|\le 0.6647958756\ldots$.
\end{itemize}
\end{thm}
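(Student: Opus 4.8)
\textbf{Proof plan for Theorem \ref{th-5}.}
The strategy parallels the proof of Theorem \ref{th-4}, but now the hypothesis $a_3=0$ does \emph{not} force $\omega_{11}=0$; instead \eqref{eq 7} gives $2\omega_{13}+3\omega_{11}^2=0$, i.e. $\omega_{13}=-\tfrac32\omega_{11}^2$. So the first step is to substitute this relation into the expressions \eqref{eq 7} for $a_4$, $a_5$ and into \eqref{eq 3} for $H_3(1)$, eliminating $\omega_{13}$ everywhere. The fifth and sixth relations in \eqref{eq 7} then express $\omega_{33}$ and $\omega_{35}$ in terms of $\omega_{11}$, $\omega_{15}$, $\omega_{17}$ (after using $\omega_{13}=-\tfrac32\omega_{11}^2$), which is what lets us reduce each quantity to a function of the ``free'' Grunsky data $\omega_{11},\omega_{15},\omega_{17}$. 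Part $(i)$ is immediate: $|a_2|=2|\omega_{11}|$ and the classical $|a_3-a_2^2|\le1$ with $a_3=0$ gives $|a_2|^2\le1$, hence $|\omega_{11}|\le\tfrac12$.

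For $(ii)$ and $(iv)$: from \eqref{eq 7}, $a_4=2\omega_{33}+8\omega_{11}\omega_{13}+\tfrac{10}{3}\omega_{11}^3$ and the fifth relation gives $3\omega_{33}=3\omega_{15}-3\omega_{11}\omega_{13}+\omega_{11}^3$. Substituting $\omega_{13}=-\tfrac32\omega_{11}^2$ collapses $a_4$ to a linear combination of $\omega_{15}$ and $\omega_{11}^3$, so $|a_4|\le 2|\omega_{15}|+c|\omega_{11}|^3$ for an explicit constant $c$; bounding $|\omega_{15}|$ via \eqref{eq 9} (with $3|\omega_{13}|^2=\tfrac{27}{4}|\omega_{11}|^6$ plugged in) yields a one-variable function of $|\omega_{11}|\in[0,\tfrac12]$ to maximise by the first-derivative test, producing $\tfrac14\sqrt{21/5}+\tfrac58$. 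For $(iv)$ note that when $a_3=0$ we have $H_2(2)=a_2a_4-a_3^2=a_2a_4$, and here $|a_2|\le1$ combined with... actually one must be more careful: $|a_2|=2|\omega_{11}|$ and $a_4$ both depend on $\omega_{11}$, so $|H_2(2)|=|a_2||a_4|=2|\omega_{11}|\cdot|a_4|$ gives a (possibly different) one-variable optimisation; the claim is that its maximum still equals the bound in $(ii)$, which should fall out of the same derivative analysis once one checks the maximising $|\omega_{11}|$ is an interior point where the extra factor is compensated.

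For $(iii)$ and $(v)$: push the same substitutions through $a_5$ and $H_3(1)$. After eliminating $\omega_{13}$, $\omega_{33}$, $\omega_{35}$, both become expressions in $\omega_{11}$, $\omega_{15}$, $\omega_{17}$; applying the triangle inequality and then the bounds \eqref{eq 9} for $|\omega_{15}|$ and $|\omega_{17}|$ (each nested inside the previous) reduces everything to maximising an explicit function of the two real variables $|\omega_{11}|\in[0,\tfrac12]$ and $|\omega_{15}|\ge0$ (with $|\omega_{17}|$ already eliminated by its bound), over the region cut out by $|\omega_{11}|^2+3|\omega_{13}|^2+5|\omega_{15}|^2\le1$. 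This two-variable constrained maximisation is the main obstacle: unlike Theorem \ref{th-4}(ii), which was genuinely one-dimensional, here one must either use Lagrange multipliers / boundary analysis or argue that the optimum lies on the boundary $5|\omega_{15}|^2=1-|\omega_{11}|^2-3|\omega_{13}|^2$ (so $|\omega_{17}|=0$) and reduce to one variable. I expect the numerical constants $1.674896577\ldots$ and $0.6647958756\ldots$ to come from locating a critical point of the resulting single-variable function and evaluating there; the delicate point is justifying that no larger value occurs on the other pieces of the boundary (e.g. $|\omega_{15}|=0$, or $|\omega_{11}|\in\{0,\tfrac12\}$), which requires comparing a small finite list of candidate values.
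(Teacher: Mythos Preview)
Your overall strategy matches the paper's: substitute $\omega_{13}=-\tfrac32\omega_{11}^2$, use the fifth and sixth identities in \eqref{eq 7} to eliminate $\omega_{33}$ and $\omega_{35}$, then bound via \eqref{eq 9} and optimise in $|\omega_{11}|$. Two small corrections and one simplification you are missing.

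First, $|\omega_{13}|^2=\tfrac{9}{4}|\omega_{11}|^{4}$, so $3|\omega_{13}|^2=\tfrac{27}{4}|\omega_{11}|^{4}$, not $|\omega_{11}|^{6}$. Second, for both $(ii)$ and $(iv)$ the resulting one-variable functions turn out to be \emph{increasing} on $[0,\tfrac12]$, so the maximum is taken at the endpoint $x=\tfrac12$; there $|a_2|=2x=1$, and that is exactly why the bounds in $(ii)$ and $(iv)$ coincide --- no interior critical point is involved.

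The main point concerns $(iii)$ and $(v)$. You anticipate a genuine two-variable constrained optimisation in $(|\omega_{11}|,|\omega_{15}|)$, with Lagrange multipliers or a boundary case analysis. The paper sidesteps this entirely. After the triangle inequality the upper bound is a sum of non-negative terms, and one may bound $|\omega_{15}|$ by $\tfrac{1}{\sqrt5}\sqrt{1-|\omega_{11}|^2-\tfrac{27}{4}|\omega_{11}|^4}$ \emph{and simultaneously} bound $|\omega_{17}|$ by the weaker estimate $\tfrac{1}{\sqrt7}\sqrt{1-|\omega_{11}|^2-\tfrac{27}{4}|\omega_{11}|^4}$, i.e.\ simply drop the $-5|\omega_{15}|^2$ term from the last line of \eqref{eq 9}. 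Each substitution only increases the right-hand side, so the result is still a valid upper bound, now depending on the single variable $x=|\omega_{11}|\in[0,\tfrac12]$. The stated constants $1.6748\ldots$ and $0.6647\ldots$ are precisely the maxima of these one-variable functions (at $x\approx0.4396$ and $x\approx0.4586$ respectively). Your two-variable route would also succeed and could only yield the same or smaller numbers, but it is more work than the argument actually requires.
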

\medskip

\begin{proof}$\mbox{}$
\begin{itemize}
\item[($i$)] We imitate the proof of Theorem \ref{th-4}(i) and from  $|a_3-a_2^2|\le1$,  $a_3=0$, we receive $|a_2^2|\le1$, i.e., $|a_2|\le1$.

\medskip

Further we will use that from \eqref{eq 7} follows $a_3=2\omega_{13}+3\omega_{11}^2=0$, and then
\be\label{eq 16}
\omega_{13}=-\frac32 \omega_{11}^2 \quad \left(\Leftrightarrow\,\, \omega_{11}^2=-\frac23\omega_{13}\right),
\ee
and also, from $|a_2|=|2\omega_{11}|\le1$,
%\be\label{eq 17}
\[
|\omega_{11}|\le \frac12 \quad\mbox{and}\quad |\omega_{13}|\le \frac38 \,\,(\mbox{by } \eqref{eq 16}.
\]
%\ee
%
\medskip

\item[($ii$)] By using \eqref{eq 7} and \eqref{eq 16}, we obtain
\be\label{eq 18}
|a_4|
= \left|2\omega_{33}+8\omega_{11} \left(-\frac32\omega_{11}^2\right)+\frac{10}{3}\omega_{11}^3\right|= \left|2\omega_{33}-\frac{26}{3}\omega_{11}^3\right|.
\ee
On the other hand, using the fifth relation in \eqref{eq 7}, and \eqref{eq 16}, we have
\be\label{eq 19}
\begin{split}
\omega_{33}&=\omega_{15}-\omega_{11}\omega_{13}+\frac{1}{3}\omega_{11}^{3}=\omega_{15}-\omega_{11}(-\frac32\omega^2_{11})+\frac{1}{3}\omega_{11}^{3}\\
&=\omega_{15}+\frac{11}{6}\omega_{11}^{3}.
\end{split}
\ee
Combining \eqref{eq 18} and \eqref{eq 19} we obtain
\be\label{eq 20}
\begin{split}
|a_4|
&=|2\omega_{15} -5 \omega_{11}^{3}|\leq 2|\omega_{15}|+5|\omega_{11}|^{3}\\
&\leq \frac{2}{\sqrt{5}}\sqrt{1-|\omega_{11}|^{2}-\frac{27}{4}|\omega_{11}|^{4}}+5|\omega_{11}|^{3}\\
&=:F_{2}(|\omega_{11}|),
\end{split}
\ee
where
\[F_{2}(x)=\frac{2}{\sqrt{5}}\sqrt{1-x^{2}-\frac{27}{4}x^{4}}+5x^{3},\quad  0\leq x \leq \frac{1}{2},\]
and where we used \eqref{eq 9} and \eqref{eq 16} to obtain
\be\label{eq 21}
|\omega_{15}|\leq \frac{1}{\sqrt{5}}\sqrt{1-|\omega_{11}|^{2}-\frac{27}{4}|\omega_{11}|^{4}}.
\ee
Finally, $F_{2}(x)$ is strictly increasing function on the interval $[0,1/2]$, attaining its maximal value $\frac14\sqrt{\frac{21}{5}}+\frac{5}{8} = 1.1373\ldots$ for  $x=1/2$.

\medskip
\item[($iii$)]
From the last relation in \eqref{eq 7}, using \eqref{eq 16} and \eqref{eq 19}, after simple calculation we receive
\be\label{eq 22}
\omega_{35}=\omega_{17}-\omega_{11}\omega_{15}-\frac{15}{4}\omega_{11}^{4}.
\ee
Using the relations \eqref{eq 7} and \eqref{eq 22}, we get
\be\label{eq 23}
\begin{split}
|a_5|
%&= |2\omega_{35} + 8\omega_{11}\omega_{15} +5\omega_{15}^2 -10\omega_{11}^4|\\
&= \left|2\omega_{17} + 6\omega_{11}\omega_{15}  - \frac{25}{4}\omega_{11}^4\right|\\
&\leq 2|\omega_{17}| + 6|\omega_{11}||\omega_{15}|  +\frac{25}{4}|\omega_{11}|^4 \\
&\leq \left( \frac{2}{\sqrt{7}}+\frac{6}{\sqrt{5}}|\omega_{11}|\right)\sqrt{1-|\omega_{11}|^{2}-\frac{27}{4}|\omega_{11}|^{4}} +\frac{25}{4}|\omega_{11}|^{4}\\
&=:F_{3}(x),
\end{split}
\ee
where
\[F_{3}(x)=\left(\frac{2}{\sqrt{7}}+\frac{6}{\sqrt{5}}x \right) \sqrt{1-x^{2}-\frac{27}{4}x^{4}}+\frac{25}{4}x^{4},\quad  0\leq x \leq \frac{1}{2}, \]
and where we used the estimate  given in \eqref{eq 21}, the estimate from \eqref{eq 9}, and
\be\label{eq 24}
\begin{split}
|\omega _{17}| &\leq\frac{1}{\sqrt{7}}\sqrt{1-|\omega _{11}|^{2}-3|\omega _{13}|^{2}-5|\omega _{15}|^{2}}\\
&\leq \frac{1}{\sqrt{7}}\sqrt{1-|\omega _{11}|^{2}-3|\omega _{13}|^{2}}.
\end{split}
\ee
Now, the first derivative test shows that on the interval $[0,1/2]$, the function $F_3$ has maximal value 1.674896577\ldots attained for $x=0.43957885\ldots$.
%$maxF_{3}(x)$ when $0\leq x \leq \frac{1}{2}$  ?
%
\medskip
\item[($iv$)] Since $a_{3}=0$ using \eqref{eq 2} we have $H_2(2)=a_{2}a_{4}$
and then from the relation \eqref{eq 18} and estimation \eqref{eq 21}, we obtain
%\be\label{eq 25}
\[
\begin{split}
|H_2(2)|&=|a_{2}a_{4}|=|2\omega_{11}(2\omega_{15}-5\omega_{11}^{3})\\
&\leq 4|\omega_{11}||\omega_{15}|+ 10|\omega_{11}|^{4}\\
&\leq \frac{4}{\sqrt{5}}|\omega_{11}|\sqrt{1-|\omega_{11}|^{2}-\frac{27}{4}|\omega_{11}|^{4}}+10|\omega_{11}|^{4}\\
&=:F_{4}(|\omega_{11}|),
\end{split}
\]
%\ee
where
\[F_{4}(x)=\frac{4}{\sqrt{5}}x\sqrt{1-x^{2}-\frac{27}{4}x^{4}}+10x^{4},\quad 0\leq x \leq \frac{1}{2}.\]
Again, the first derivative test shows that  $F_4$ is strictly increasing on the interval $[0,1/2]$ with maximal value 1.1373\ldots attained for $x=1/2$.

\medskip

\item[($v$)] After applying  $a_{3}=0$ in the definition \eqref{eq 3} we receive
\be\label{eq 26}
H_3(1)=-a_{4}^{2}-a_{5}a_{2}^{2}.
\ee
Recall that in \eqref{eq 20} and \eqref{eq 23} we found $a_{4}=2\omega_{15}-5\omega_{11}^{3}$ and $a_{5}=2\omega_{17} + 6\omega_{11}\omega_{15}  - \frac{25}{4}\omega_{11}^4$, respectively.
From these facts and \eqref{eq 26}, after some calculations we obtain
\be\label{eq 27}
H_3(1) = - \omega_{15}^2 -4 \omega_{11}^3 \omega_{15} - 8 \omega_{11}^2 \omega_{17}.
\ee
So, using \eqref{eq 27} and estimates given in \eqref{eq 21} and \eqref{eq 24} we get
%\be\label{eq 28}
\[
\begin{split}
|H_3(1)| &  =  |\omega_{15}|^2 + 4 |\omega_{11}|^3 |\omega_{15}| +  8 |\omega_{11}|^2 |\omega_{17}| \\
& \leq \frac{1}{5} \left(1-|\omega_{11}|^{2}-\frac{27}{4}|\omega_{11}|^{4} \right) \\
&\quad + \left(\frac{4}{\sqrt{5}} |\omega_{11}|^{3} + \frac{8}{\sqrt{7}}|\omega_{11}|^{2}\right) \sqrt{1-|\omega_{11}|^{2}-\frac{27}{4}|\omega_{11}|^{4}}\\
&=:F_{5}(|\omega_{11}|),
\end{split}
\]
%\ee
where
\[
\begin{split}
F_{5}(x) &= \frac{1}{5}\left(1-x^{2}-\frac{27}{4}x^{4}\right)+ \left(\frac{4}{\sqrt{5}}x^{3}+\frac{8}{\sqrt{7}}x^{2}\right)\sqrt{1-x^{2}-\frac{27}{4}x^{4}},
\end{split}
\]
$0\leq x \leq \frac{1}{2}$. The first derivative test shows that the function $F_{5}(x)$ when $0\leq x \leq 1/2$, has maximal value 0.6647958756\ldots attained for $x=0.458573\ldots$.
\end{itemize}
\end{proof}

\medskip

*************************** DTS

\begin{thm}\label{th-6}
Let $f\in\es$ and be given by \eqref{eq 1}.
\begin{itemize}
  \item[(i)] Then $|a_4|-|a_3| \le 1.75185\ldots$.
  \item[(ii)] If $f$ is an odd function, then $|a_5|-|a_3| \le \frac{2}{\sqrt7}=0.7559\ldots.$
\end{itemize}
\end{thm}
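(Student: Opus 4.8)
The plan is to follow the Grunsky-coefficient strategy established in the proofs of Theorems \ref{th-4} and \ref{th-5}, but now without any vanishing hypothesis on $a_2$ or $a_3$, so all four of $\omega_{11},\omega_{13},\omega_{15},\omega_{17}$ (and $\omega_{33},\omega_{35}$) are genuinely in play. For part (i), I would start from the expressions \eqref{eq 7} for $a_3$ and $a_4$, use the fifth relation in \eqref{eq 7} to eliminate $\omega_{33}$ in favor of $\omega_{15}$ (so $\omega_{33}=\omega_{15}-\omega_{11}\omega_{13}+\tfrac13\omega_{11}^3$), and thus write $a_4$ as a polynomial in $\omega_{11},\omega_{13},\omega_{15}$ only. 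Then $|a_4|-|a_3| \le |a_4| - 0$ is too lossy; instead I would keep $|a_3| = |2\omega_{13}+3\omega_{11}^2|$ and bound $|a_4|-|a_3|$ by the triangle inequality after grouping terms cleverly — ideally isolating a block of the form $|2\omega_{13}+3\omega_{11}^2|$ inside $a_4$ so that it partially cancels against $|a_3|$, in the spirit of how \cite{OTT} obtained Theorem \ref{th-3}. After that cancellation, what remains is a function of the three real quantities $t_1=|\omega_{11}|$, $t_2=|\omega_{13}|$, $t_3=|\omega_{15}|$, subject to the constraints $t_1^2 + 3t_2^2 + 5t_3^2 \le 1$ coming from \eqref{eq 9}, plus the phase-dependent lower bound on $|a_3|$. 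Here one must be careful: $|a_4|-|a_3|$ is not purely a function of moduli because of the cancellation, so I would either (a) fix the worst-case relative phases making $|a_3|$ as small as possible while $|a_4|$ as large as possible, reducing to a genuine extremal problem in $(t_1,t_2,t_3)$, or (b) retain one phase parameter and optimize over it first.

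For part (ii), when $f$ is odd we have $a_2 = a_4 = 0$, hence from \eqref{eq 7} $\omega_{11}=0$ and $2\omega_{33}+8\omega_{11}\omega_{13}+\tfrac{10}{3}\omega_{11}^3 = 0$ forces $\omega_{33}=0$. The fifth relation in \eqref{eq 7} with $\omega_{11}=0$ then gives $\omega_{15}=\omega_{33}=0$, and \eqref{eq 12} gives $\omega_{35}=\omega_{17}-\omega_{13}^2$. Now $a_3 = 2\omega_{13}$ and, from \eqref{eq 11}, $a_5 = 2\omega_{35}+5\omega_{13}^2 = 2\omega_{17}+3\omega_{13}^2$. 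Since $\omega_{15}=0$, the constraint chain \eqref{eq 9} gives $|\omega_{17}| \le \tfrac{1}{\sqrt7}\sqrt{1-3|\omega_{13}|^2}$ and $|\omega_{13}|\le \tfrac1{\sqrt3}$. Writing $y=|\omega_{13}|$,
\[
|a_5|-|a_3| \le 2|\omega_{17}|+3y^2-2y \le \frac{2}{\sqrt7}\sqrt{1-3y^2}+3y^2-2y =: G(y), \qquad 0\le y \le \tfrac{1}{\sqrt3}.
\]
Wait — this estimate uses $|a_5|\le 2|\omega_{17}|+3y^2$, which is the same bound as in \eqref{eq 13}, but here we additionally subtract $|a_3|=2y$. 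One checks $G(0)=\tfrac{2}{\sqrt7}$, $G'(0)<0$, and that $G$ has no interior maximum exceeding $\tfrac{2}{\sqrt7}$ on $[0,\tfrac1{\sqrt3}]$ by the first-derivative test; the maximum is attained at $y=0$, giving the stated bound $\tfrac{2}{\sqrt7}=0.7559\ldots$. (Note the bound matches the $y=0$ value of $F_1$-type estimates, and $a_2=0$, $|a_3|\le 1$ is consistent with $y\le\tfrac1{\sqrt3}<\tfrac12$; the classical $|a_3|\le1$ is not even needed here.)

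The main obstacle is part (i). Unlike the theorems with a vanishing coefficient, here the extremal problem is genuinely three-dimensional, the objective is not a function of moduli alone, and one cannot simply discard the $\omega_{17}$ terms since $a_4$ does not involve $\omega_{17}$ but the interplay with $|a_3|$ still couples everything. I expect the key technical work to be: (1) choosing the right algebraic grouping of $a_4$ so a substantial part of $|a_3|$ cancels — this is where the improvement over Grinspan's $3.61\ldots$ and over Theorem \ref{th-3}'s $2.103\ldots$ comes from; (2) handling the phases correctly to pass to a real optimization; and (3) carrying out the constrained maximization of the resulting function of $(t_1,t_2,t_3)$ on the ellipsoidal region $t_1^2+3t_2^2+5t_3^2\le 1$, most likely by Lagrange multipliers or by reducing to the boundary and then to a one-variable first-derivative test, yielding the numerical value $1.75185\ldots$. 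I would present the reduction carefully and then state that a routine (if tedious) first-derivative analysis gives the claimed constant.
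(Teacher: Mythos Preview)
Your treatment of part (ii) is correct and in fact slightly more direct than the paper's: you bound $|a_5|\le 2|\omega_{17}|+3|\omega_{13}|^2$ and subtract $|a_3|=2|\omega_{13}|$ exactly, then show the resulting one-variable function $G(y)=\tfrac{2}{\sqrt7}\sqrt{1-3y^2}+3y^2-2y$ is decreasing on $[0,1/\sqrt3]$ with maximum $\tfrac{2}{\sqrt7}$ at $y=0$. The paper instead uses the extra step $|a_5|-|a_3|\le |a_5|-2|\omega_{13}|\,|a_3|\le |2\omega_{17}-\omega_{13}^2|$ (via $|a_3|=2|\omega_{13}|\le 1$ and the reverse triangle inequality) before optimizing; both routes land on the same bound.

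For part (i), however, there is a genuine gap. The whole point of the paper's argument is the single inequality
\[
|a_4|-|a_3|\;\le\;|a_4|-|\omega_{11}|\,|a_3|\;\le\;|a_4-\omega_{11}a_3|
\;=\;2\Big|\omega_{33}+3\omega_{11}\omega_{13}+\tfrac16\omega_{11}^3\Big|,
\]
valid because $|\omega_{11}|\le 1$. After substituting $\omega_{33}=\omega_{15}-\omega_{11}\omega_{13}+\tfrac13\omega_{11}^3$ this becomes $|2\omega_{15}+4\omega_{11}\omega_{13}+\omega_{11}^3|$, and the triangle inequality together with the bound on $|\omega_{15}|$ from \eqref{eq 9} reduces everything to a \emph{two}-variable maximization of
\[
F_6(x,y)=\tfrac{2}{\sqrt5}\sqrt{1-x^2-3y^2}+4xy+x^3
\]
over $\{0\le x\le 1,\ 0\le y\le \tfrac{1}{\sqrt3}\sqrt{1-x^2}\}$, whose interior critical point yields $1.75185\ldots$. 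Your proposal never identifies this subtraction trick: you describe the problem as ``genuinely three-dimensional'' with phases to be optimized, and your suggested grouping (isolating a literal copy of $2\omega_{13}+3\omega_{11}^2$ inside $a_4$) would require a variable factor anyway --- which is precisely $\omega_{11}$. Without the step $|a_4|-|a_3|\le |a_4-\omega_{11}a_3|$, you are left with a difference of two moduli depending on relative phases, and there is no clear path to the constant $1.75185\ldots$; in particular, naively bounding $|a_4|$ and $|a_3|$ separately cannot recover this value. The missing idea is exactly that one line.
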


\begin{proof}$ $
\begin{itemize}
\item[(i)] Using \eqref{eq 7} and  $|\omega_{11}|\leq 1$, we have
\be\label{eq 8-2}
|a_4|-|a_3| \le |a_4|-|\omega_{11}||a_3| \le  |a_4-\omega_{11} a_3| = 2\Big|\omega_{33}
+3\omega_{11}\omega_{13} + \frac16 \omega_{11}^3\Big|.
\ee
From the fifth relation in \eqref{eq 7} we obtain
\be\label{eq 9-2}
\omega_{33}=\omega_{15}-\omega_{11}\omega_{13}+\frac{1}{3}\omega_{11}^{3},
\ee
and using the relations  \eqref{eq 8-2} and \eqref{eq 9-2}, after some simple calculations we have
%\be\label{eq 10-2}
\[
\begin{split}
|a_4|-|a_3|& \leq|2\omega_{15}+4\omega_{11}\omega_{13}+\omega_{11}^{3}|\\
&\leq |\omega_{15}|+4|\omega_{11}||\omega_{13}|+|\omega_{11}|^{3}\\
&\leq \frac{2}{\sqrt{5}}\sqrt{1-|\omega_{11}|^{2}-3|\omega_{13}|^{2}}+4|\omega_{11}||\omega_{13}|
+|\omega_{13}|^{3}\\
&=: F_{6}(|\omega_{11}|,|\omega_{13}|,
\end{split}
\]
%\ee
where
%\be\label{eq 11-2}
\[
F_{6}(x,y)=\frac{2}{\sqrt{5}}\sqrt{1-x^{2}-3y^{2}}+4xy+x^{3},
\]
%\ee
and $0\leq x \leq1$, $0\leq y \leq\frac{1}{\sqrt{3}}\sqrt{1-x^{2}}.$

\medskip

Now, we need to find maximum of the function  $F_{6}(x,y)$ when
\[(x,y)\in \left\{(x,y):0\leq x \leq1, 0\leq y \leq\frac{1}{\sqrt{3}}\sqrt{1-x^{2}}\right\}:=D_{1}.\]
We start the analysis in the interior of $D_1$. From
\[ \frac{\partial F_6(x,y)}{\partial x} = -\frac{2 x}{\sqrt{5} \sqrt{1-x^2-3 y^2}}+3 x^2+4 y \]
and
\[ \frac{\partial F_6(x,y)}{\partial y} = 4 x-\frac{6 y}{\sqrt{5} \sqrt{1-x^2-3 y^2}},\]
we receive that the stationary points (it existing) satisfy
\[ 3y\frac{\partial F_6(x,y)}{\partial x} - x\frac{\partial F_6(x,y)}{\partial y} = x^2 (9 y-4)+12 y^2 = 0, \]
i.e., on $D_1$,
\[ x = \frac{2 \sqrt{3} y}{\sqrt{4-9 y}}.\]
Finally, substituting $\frac{2 \sqrt{3} y}{\sqrt{4-9 y}}$ for $x$ in $\frac{\partial F_6(x,y)}{\partial x} = 0$, and numerically solving the corresponding equation for $y$, we receive solution $y_{01} = 0.2872\ldots$, and further $x_{01} = \frac{2 \sqrt{3} y_{01}}{\sqrt{4-9 y_{01}}} = 0.83634\ldots$. It is easy to check that $(x_{01},y_{01})\in D_1$ and that $F_6(x_{01},y_{01}) = 1.75185\ldots.$

On the edges of $D_1$ we have:
\begin{itemize}
  \item[-] $F_6(0,y) = \frac{2}{\sqrt{5}} \sqrt{1-3 y^2}$, with maximal value for $y\ge0$, $\frac{2}{\sqrt{5}} = 0.8944\ldots$;
  \item[-] $F_6(x,0) = \frac{2}{\sqrt{5}} \sqrt{1-x^2} + x^3$, with maximal value for $0\le x\le1$ equaling to $1.13666\ldots$ for $x=0.9494\ldots$;
  \item[-] $F_6\left(x, \frac{\sqrt{1-x^2}}{\sqrt{3}}\right) = \frac{4}{\sqrt{3}} x\sqrt{1-x^2} + x^3$, with maximal value for $0\le x\le1$ equaling to $1.6496\ldots$ for $x=0.8628\ldots$.
\end{itemize}
So, the function $F_6$ attains its maximal value $1.75185\ldots$ in the interior point $(x_{01},y_{01})$ of its domain $D_1$ and the conclusion of part (i) follows.

\medskip

\item[(ii)] Since $f$ is odd, $a_2=0$, and then using $|a_3-a_2^2|\le 1$, we have $|a_3|=|2\omega_{13}| \le 1$, i.e., $|\omega_{13}| \le \frac12$. Further, from \eqref{eq 7}, \eqref{eq 12} and \eqref{eq 13}, we receive
\[ |a_5| = |2\omega_{17} +3\omega_{13}^2|. \]
Also, since $a_4=0$, from \eqref{eq 7} follows $\omega_{33}=0$ and $\omega_{15}=0$. So, since $|a_3|=|2\omega_{13}| \le 1$,
\[
\begin{split}
|a_5|-|a_3| &\le |a_5|-2|\omega_{13}||a_3| =  |2\omega_{17} + 3\omega_{13}^2| - 4|\omega_{13}|^2\\
&\le |(2\omega_{17} + 3\omega_{13}^2)-4\omega_{13}^2| = |2\omega_{17} - \omega_{13}^2|\\
& \le 2|\omega_{17}| + |\omega_{13}|^2 \le \frac{2}{\sqrt7}\sqrt{1-3|\omega_{13}|^2} + |\omega_{13}|^2\\
&\le \frac{2}{\sqrt7},
\end{split}
\]
where $|\omega_{13}|\le \frac12.$
\end{itemize}
\end{proof}

\begin{rem}
The estimate given in Theorem \ref{th-6}(ii) is an improvement of the result $|a_5|-|a_3| < 1$ given in \cite[p. 17]{lecko}.
\end{rem}

\medskip

\end{document}